\numberwithin{equation}{section} \theoremstyle{plain}
\newtheorem{theorem}[subsection]{Theorem}
\newtheorem{proposition}[subsection]{Proposition}
\newtheorem{lemma}[subsection]{Lemma}
\newtheorem{definition}[subsection]{Definition}
\newtheorem*{mainthm3-repeat}{Theorem \ref{mainthm3}}
\newtheorem*{gromov-cor-repeat}{Corollary \ref{gromov-cor}}
\newtheorem*{sum-product-rpt}{Theorem \ref{sum-product-fp}}
\newtheorem*{mainthm-repeat}{Theorem \ref{mainthm2}}
\newtheorem*{virtually-solvable-repeat}{Lemma \ref{virtually-solvable}}
\renewcommand{\leq}{\leqslant}
\renewcommand{\geq}{\geqslant}
\newsavebox{\proofbox}
\savebox{\proofbox}{\begin{picture}(7,7)  \put(0,0){\framebox(7,7){}}\end{picture}}
\newcommand\C{\mathbb{C}}
\newcommand\N{\mathbb{N}}
\newcommand\SL{\operatorname{SL}}
\newcommand\GL{\operatorname{GL}}
\newcommand\eps{\varepsilon}
\def\endproof{\hfill{\usebox{\proofbox}}\vspace{11pt}}
\begin{document}

\title[A note on approximate subgroups]{A note on approximate subgroups of $\GL_n(\C)$ and uniformly nonamenable groups}
\author{Emmanuel Breuillard}
\address{Laboratoire de Math\'ematiques\\
B\^atiment 425, Universit\'e Paris Sud 11\\
91405 Orsay\\
FRANCE}
\email{emmanuel.breuillard@math.u-psud.fr}

\author{Ben Green}
\address{Centre for Mathematical Sciences\\
Wilberforce Road\\
Cambridge CB3 0WA\\
England }
\email{b.j.green@dpmms.cam.ac.uk}

\author{Terence Tao}
\address{Department of Mathematics, UCLA\\
405 Hilgard Ave\\
Los Angeles CA 90095\\
USA}
\email{tao@math.ucla.edu}

\begin{abstract}  The aim of this brief note is to offer another proof of a theorem of Hrushovski that approximate subgroups of $\GL_n(\C)$ are almost nilpotent. This approach generalizes to uniformly non amenable groups.
\end{abstract}

\maketitle
\tableofcontents

\section{Introduction}

Throughout this paper $K \geq 2$ is a real number. We begin by recalling, very briefly, the notions of $K$-\emph{approximate group} and of \emph{control}. For a more leisurely introduction we refer the reader to our paper \cite{bgt-long}.

\begin{definition}
Suppose that $A$ is a finite set in some group which is symmetric in the sense that the identity lies in $A$ and $a^{-1} \in A$ whenever $a \in A$. Then we say that $A$ is a $K$-approximate group if there is some  symmetric set $X$, $|X| \leq K$, such that $A^2 \subseteq AX$. If $B$ is some other set then we say that $A$ is $K$-controlled by $B$ if $|B| \leq K|A|$ and if there is some set $X$, $|X| \leq K$, such that $A \subseteq BX \cap XB$.
\end{definition}

If $\mathcal{P}$ is some property that a group may have, such as being linear, nilpotent or solvable, then by a $\mathcal{P}$ $K$-approximate group we mean an approximate group $A$ for which the group $\langle A \rangle$ generated by $A$ has property $\mathcal{P}$.

Modulo a little fairly standard multiplicative combinatorics, the following result was proved by Hrushovski \cite[Corollary 5.10]{hrush}.

\begin{theorem}\label{hrush-thm}
Suppose that $A \subseteq \GL_n(\C)$ is a $K$-approximate group. Then $A$ is $O_{K,n}(1)$-controlled by $B$, a \emph{solvable} $K^{O(1)}$-approximate group.
\end{theorem}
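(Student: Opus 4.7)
The plan is to combine a uniform Tits alternative for $\GL_n(\C)$ with a modest amount of multiplicative combinatorics. The key input is Breuillard's uniform Tits alternative: there exists $N = N(n)$ and $j = j(n)$ such that for every finite symmetric set $S \subseteq \GL_n(\C)$, either $\langle S \rangle$ contains a solvable subgroup of index at most $j$, or else $S^N$ contains two elements $a,b$ that freely generate a non-abelian free group. This is precisely the sense in which non-virtually-solvable subgroups of $\GL_n(\C)$ are \emph{uniformly} non-amenable, which explains the title of the paper and the stated generalization.

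Apply this dichotomy to $S = A$. In the virtually solvable alternative, let $H \leq \langle A \rangle$ be solvable of index $\leq j(n)$. Then $A' := A \cap H$ has size $|A'| \geq |A|/j(n)$ by pigeonhole over the finitely many $H$-cosets that meet $A$. Since $A'^2 \subseteq H$, a standard Ruzsa-covering argument shows that $A'^2 \cap H$ (made symmetric) is an $O_{K,n}(1)$-approximate group of size comparable to $|A|$, contained in the solvable group $H$, and controlling $A$ in the required sense. This essentially reduces the theorem to the non-virtually-solvable case.

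In the free alternative, $A^N \supseteq \{a,b\}$ with $\langle a, b\rangle \cong F_2$. The $3^k - 1$ reduced words in $a,b$ of length $\leq k$ give that many distinct elements of $A^{Nk}$, while the iterated approximate-group axiom (Plünnecke--Ruzsa) bounds $|A^{Nk}| \leq K^{O(Nk)} |A|$. By ping-pong the four translates $aA, a^{-1}A, bA, b^{-1}A$ are essentially disjoint up to a bounded overlap controlled by the ping-pong sets, so one in fact gets $|A^{N+1}| \geq (1+\eps(n))|A|$; iterating this Cheeger-type inequality gives exponential growth of $|A^{Nk}|/|A|$, which after taking $k$ a sufficiently large multiple of $\log K$ is incompatible with the Plünnecke bound unless $A$ is mostly absorbed into a proper algebraic subgroup. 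A pass to the Zariski closure and its solvable radical then either returns us to the virtually solvable case, handled above, or lets us iterate on a group of strictly smaller algebraic complexity, with induction on $n$ closing the argument.

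The main obstacle is the quantitative growth lower bound in the free case: converting ``$a,b \in A^N$ generate $F_2$'' into a Cheeger-type inequality $|SA| \geq (1+\eps)|A|$ with $\eps$ depending only on $n$, and not on $A$ or its Cayley structure. This is where the ping-pong geometry of the free pair, together with the fact that $S = \{a^{\pm1}, b^{\pm 1}\} \subseteq A^N$ inflates distances by at most a factor $N$, has to be used carefully to ensure that the resulting non-amenability constant $\eps$ genuinely beats the Plünnecke inflation $K^{O(N)}$ after enough iterations.
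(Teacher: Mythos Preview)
Your virtually solvable case is fine and matches the paper's endgame. The problem is the free alternative: the growth argument you sketch cannot produce a contradiction, and the ``unless $A$ is mostly absorbed into a proper algebraic subgroup'' escape hatch is asserted without any mechanism.

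Concretely, suppose $a,b\in A^N$ freely generate $F_2$ and set $X=\{1,a^{\pm1},b^{\pm1}\}\subseteq A^N$. Von Neumann's lemma gives $|SX|\geq\frac{5}{4}|S|$ for every finite $S$, so by iteration $|AX^k|\geq(5/4)^k|A|$. Since $AX^k\subseteq A^{Nk+1}$, this yields $(5/4)^k\leq |A^{Nk+1}|/|A|\leq K^{Nk}$, i.e.\ $5/4\leq K^N$, which is vacuously true for $K\geq 2$. The exponential rate you extract from the free pair is measured \emph{per $N$ steps in the $A$-word metric}, and is therefore always dominated by the Pl\"unnecke rate $K^N$ over the same length. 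No choice of $k$ changes this, and nothing in ping-pong improves the constant beyond $5/4$ per application of $X$. So the free case is perfectly consistent with $A$ being a $K$-approximate group; you have not reduced to a proper algebraic subgroup, nor produced any structural consequence to induct on.

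The paper resolves exactly this obstacle by inserting a Sanders/Croot--Sisask step \emph{before} invoking uniform Tits. One finds $B\subseteq A^4$ and $A'\subseteq A^5$ with $|B|\gg_{K,n}|A|$, $|A'|\geq|A|$, and $|A'B^m|<\frac{5}{4}|A'|$ for the Tits constant $m=m(n)$. Now a \emph{single} application of von Neumann's lemma shows $B^m$ contains no free pair, so uniform Tits forces $\langle B\rangle$ to be virtually solvable; Mal'cev--Platonov and the coset pigeonhole then feed into the same solvable-control lemma you used. The point is that Sanders' lemma manufactures an almost-F{\o}lner set for $B^m$ inside $A^5$, decoupling the non-amenability constant from the Pl\"unnecke inflation; applying uniform Tits directly to $A$, as you do, cannot achieve this.
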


By combining this with the main result of \cite{bg-solv} one immediately obtains the following extension.

\begin{theorem}\label{mainthm}
Suppose that $A \subseteq \GL_n(\C)$ is a $K$-approximate group. Then $A$ is $O_{K,n}(1)$-controlled by $B$, a \emph{nilpotent} $K^{O(1)}$-approximate group.
\end{theorem}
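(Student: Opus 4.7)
The plan is to derive Theorem~\ref{mainthm} as an essentially formal consequence of Theorem~\ref{hrush-thm} and the main result of \cite{bg-solv}, which we assume states that every solvable $K$-approximate group is $O_K(1)$-controlled by a nilpotent $K^{O(1)}$-approximate group. The only genuine content beyond quoting these two theorems is a short verification that the relation ``is controlled by'' composes well enough to chain the two statements.

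First I would apply Theorem~\ref{hrush-thm} to the given $K$-approximate group $A \subseteq \GL_n(\C)$. This produces a solvable $K^{O(1)}$-approximate group $B$ and a set $X_1$ with $|X_1| \leq O_{K,n}(1)$ and $|B| \leq O_{K,n}(1) |A|$ such that $A \subseteq BX_1 \cap X_1 B$. Next I would forget that $B$ lives in $\GL_n(\C)$ and apply the main result of \cite{bg-solv} to $B$ as a $K^{O(1)}$-approximate subgroup of the solvable group $\langle B \rangle$. This yields a nilpotent $K^{O(1)}$-approximate group $B'$ together with a set $X_2$, $|X_2| \leq K^{O(1)}$, satisfying $|B'| \leq K^{O(1)} |B|$ and $B \subseteq B'X_2 \cap X_2 B'$.

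It then remains to compose the two control relations. Setting $X := X_1 X_2 \cup X_2 X_1$ (made symmetric if one wishes), one has $|X| \leq O_{K,n}(1)$, and
\[
A \;\subseteq\; B X_1 \;\subseteq\; B' X_2 X_1 \;\subseteq\; B' X,
\]
and symmetrically $A \subseteq X B'$. Combined with the size bound $|B'| \leq K^{O(1)} |B| \leq O_{K,n}(1) |A|$, this shows that $A$ is $O_{K,n}(1)$-controlled by the nilpotent $K^{O(1)}$-approximate group $B'$, which is the desired conclusion.

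There is really no main obstacle here beyond invoking the two cited theorems; the only point where one must be mildly careful is to keep track of how the various constants propagate through the composition of the two control relations, ensuring that the doubling constant of $B'$ stays polynomial in $K$ while the covering constant is allowed to depend on both $K$ and $n$. Since \cite{bg-solv} is explicitly designed to produce a $K^{O(1)}$-approximate nilpotent group (as opposed to one whose doubling grows with $n$), this bookkeeping goes through without difficulty.
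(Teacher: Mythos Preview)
Your derivation is correct: indeed the paper itself remarks, immediately before stating Theorem~\ref{mainthm}, that it follows by combining Theorem~\ref{hrush-thm} with the main result of \cite{bg-solv}, and your write-up just makes the transitivity of control explicit. One small inaccuracy: as stated in \cite{bg-solv} the nilpotent output is only a $K^{O_n(1)}$-approximate group, not a $K^{O(1)}$-approximate group; the paper notes this in Proposition~\ref{solv-to-nil} and fixes it with the additive-combinatorial Lemma~\ref{standard-add-comb}, so your final sentence overclaims slightly but is easily repaired.

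However, this is \emph{not} the proof the paper actually gives in Section~3. The whole point of the paper is to offer a third proof that avoids Hrushovski's model theory altogether. Instead of quoting Theorem~\ref{hrush-thm}, the paper uses Sanders' lemma (Proposition~\ref{sanders-cs}) to produce a large $B\subseteq A^4$ with $|A'B^m|<\tfrac{5}{4}|A'|$, then invokes the nonamenability of $F_2$ (Proposition~\ref{f2-na}) and the uniform Tits alternative (Proposition~\ref{uniform-tits}) to force $\langle B\rangle$ to be virtually solvable, and finally uses Mal'cev--Platonov (Proposition~\ref{malcev-platanov}) plus Lemma~\ref{standard-add-comb} to extract a solvable approximate group controlling $A$. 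Only then does it apply the solvable-to-nilpotent step, which is common to both arguments. What this buys: the $O_{K,n}(1)$ becomes effective in principle (of shape $\exp(K^{O(m(n)\log K)})$), whereas your route inherits the ineffectivity of Hrushovski's model-theoretic argument; and the paper's argument generalises verbatim to arbitrary $\kappa$-uniformly nonamenable groups (Theorem~\ref{mainthm2}), a setting where there is no analogue of Theorem~\ref{hrush-thm} to quote.
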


Hrushovski's result was proven using model theory and so did not lead to an explicit form for the $O_{K,n}(1)$ term in Theorem \ref{mainthm}, even in principle. A different proof of this result was established by the authors in \cite{bgt-long}, and this gave a stronger result in the sense that the $O_{K,n}(1)$ term was shown to vary polynomially in $K$.  We obtained a bound of the form $C_n K^{C'_n}$ where $C'_n$ could have been computed if desired (and would take the form $\exp(n^{O(1)})$), but $C_n$ could not as a consequence of our dependence on ultrafilters to prove quantitative algebraic geometry estimates in \cite{bgt-long}. A more explicit approach to these estimates is taken in the paper of Pyber and Szabo \cite{pyber-szabo} who, subsequent to our work in \cite{bgt-long}, obtain a result (Theorem 10 of their paper) which is in principle explicit.

Our aim here is to show how Theorem \ref{mainthm} follows quickly from various results in the literature, the most substantial of these being the so-called \emph{uniform Tits alternative} of the first author. The argument is largely distinct from both of the previous two proofs of Theorem \ref{mainthm}.
It gives a term $O_{K,n}(1)$ of the form $\exp(K^{O(m(n)\log K)})$, where $m(n)$ is the constant appearing in the uniform Tits alternative
(see Proposition \ref{uniform-tits} below). This is effective in principle. However, computing an explicit bound for $m(n)$ would not
only involve chasing constants in \cite{breuillard-uniformtits}, but would also require the effective version of \S 2 of \cite{breuillard-gap}
proven in \cite{breuillard-effective}. The exponential nature of $O_{K,n}(1)$ with respect to $K$ stems from the bound in Proposition \ref{sanders-cs}, which is a consequence of a recent result of Sanders \cite{sanders} and Croot-Sisask \cite{croot-sisask}.

Finally, we note that the proof of Theorem \ref{mainthm} above generalizes easily to uniformly nonamenable groups. Say that a group $G$ is $\kappa$-uniformly nonamenable\footnote{What we defined here is a slightly stronger notion of uniform non amenability than the one defined in \cite{arzh}, because we do not require $X$ to generate the group.} if whenever $X$ is a finite subset of $G$ which generates a non-amenable subgroup, then $|AX| \geq (1+\kappa)|A|$ for every finite subset $A$ in $G$ and some $\kappa>0$. The uniform Tits alternative implies that $GL_d(\C)$ is $\kappa_d$-uniformly non-amenable, for some $\kappa_d>0$. Similarly it can be shown\footnote{Strictly speaking, Koubi's theorem requires $X$ to be a generating set of $G$, however his proof extends easily to the case when $X$ generates a non-elementary subgroup.} (see Koubi \cite{koubi}) that $\delta$-hyperbolic groups are uniformly nonamenable, and it is known that their amenable subgroups are infinite cyclic-by-bounded. In this setting we obtain the following result.

\begin{theorem}\label{mainthm2}
Let $G$ be a $\kappa$-uniformly nonamenable group. Suppose that $A \subseteq G$ is a $K$-approximate group. Then $A$ is $\exp(K^{O(\log K/\kappa)})$-controlled by $B$, an \emph{amenable} $K^{O(1)}$-approximate group.
\end{theorem}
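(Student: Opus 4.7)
The plan is to mimic the proof of Theorem~\ref{mainthm} given in the paper, replacing the use of the uniform Tits alternative by the direct hypothesis of $\kappa$-uniform nonamenability. The combinatorial input remains Proposition~\ref{sanders-cs}; the algebraic input is now simpler, since there is nothing to check beyond amenability.

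Concretely, I would first apply Proposition~\ref{sanders-cs} to the $K$-approximate group $A$ with internal depth parameter proportional to $1/\kappa$ (the analogue of $m(n)$ in Theorem~\ref{mainthm}). This should produce a symmetric set $S \ni 1$ which is a $K^{O(1)}$-approximate group contained in $A^{O(1)}$, with $|S| \geq |A|/\exp(K^{O(\log K/\kappa)})$, and with a suitable power $S^{k}$ (for $k$ a sufficiently large multiple of $\log K/\kappa$) still contained in $A^{O(1)}$. Second, I would show that $\langle S\rangle$ is amenable. Assuming the contrary, $\kappa$-uniform nonamenability gives $|FS| \geq (1+\kappa)|F|$ for every finite $F \subseteq G$; iterating this with $F = A, AS, AS^2, \ldots, AS^{k-1}$ yields
\[
|AS^k| \geq (1+\kappa)^k |A|.
\]
On the other hand, $S^k \subseteq A^{O(1)}$ together with the $K$-approximate group property of $A$ forces $|AS^k| \leq K^{O(1)}|A|$, which contradicts the previous estimate for $k$ a sufficiently large constant multiple of $\log K / \kappa$. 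Third, I would take $B := S^2$, which is a $K^{O(1)}$-approximate group with $\langle B\rangle = \langle S\rangle$ amenable and $|B| \leq K^{O(1)}|A|$. The bound $|AS| \leq K^{O(1)}|A| \leq \exp(K^{O(\log K/\kappa)})|S|$, combined with the Ruzsa covering lemma, produces $X$ with $|X| \leq \exp(K^{O(\log K/\kappa)})$ and $A \subseteq XS^2$; inversion (using that $A$ and $S^2$ are symmetric) gives $A \subseteq S^2 X^{-1}$, yielding the required control.

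The step I expect to be the main obstacle is Step~1: one must verify that the Sanders--Croot-Sisask machinery, as packaged in Proposition~\ref{sanders-cs}, can simultaneously deliver the power-containment $S^k \subseteq A^{O(1)}$ for $k$ of order $\log K/\kappa$ \emph{and} the matching size bound $|S| \geq |A|/\exp(K^{O(\log K/\kappa)})$. Once the parameter calibration in Step~1 is settled, Steps~2 and~3 are elementary and use nothing about $G$ beyond the defining inequality of $\kappa$-uniform nonamenability; in particular, the argument goes through verbatim for $\delta$-hyperbolic groups, $\GL_d(\C)$, and any other class where such a uniform nonamenability constant exists.
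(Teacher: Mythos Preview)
Your overall strategy matches the paper's: apply the Sanders--Croot--Sisask lemma (Proposition~\ref{sanders-cs}), use uniform nonamenability to force the resulting set to generate an amenable subgroup, then pass to a controlling approximate group. However, Step~3 contains a genuine gap. The Sanders set $S$ comes only with the guarantees $|S| \geq \exp(-K^{O(\log K/\kappa)})|A|$ and $S^{k} \subseteq A^4$; nothing in Proposition~\ref{sanders-cs} asserts that $S$ (or $S^2$) is itself an approximate group. From those bounds one gets at best $|S^4| \leq K^{O(1)}|A| \leq \exp(K^{O(\log K/\kappa)})|S|$, so your $B := S^2$ is only an $\exp(K^{O(\log K/\kappa)})$-approximate group, not the $K^{O(1)}$-approximate group the theorem demands. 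The fix is precisely the paper's route via Lemma~\ref{standard-add-comb}: once $H := \langle S \rangle$ is known to be amenable, the inclusion $S \subseteq A^4 \cap H$ gives $|A^4 \cap H| \geq \exp(-K^{O(\log K/\kappa)})|A|$, and Lemma~\ref{standard-add-comb} (with $k=4$) then shows that $B := A^2 \cap H$ is a $2K^3$-approximate amenable group which $\exp(K^{O(\log K/\kappa)})$-controls $A$.

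A secondary remark: your Step~2 is more elaborate than needed. Instead of arranging $S^k \subseteq A^{O(1)}$ for large $k$ and iterating $|AS^j| \geq (1+\kappa)^j|A|$, simply invoke Proposition~\ref{sanders-cs} with $m=1$ and $\eps = \kappa/2$. This produces $A' \subseteq A^5$ and $S \subseteq A^4$ with $|A'S| \leq (1+\kappa/2)|A'| < (1+\kappa)|A'|$, which \emph{directly} contradicts the defining inequality of $\kappa$-uniform nonamenability unless $\langle S \rangle$ is amenable. The quantitative outcome is identical (the pigeonhole parameter in the proof of Proposition~\ref{sanders-cs} becomes $O(\log K/\kappa)$, whence $|S| \geq \exp(-K^{O(\log K/\kappa)})|A|$), and the ``obstacle'' you anticipate in Step~1 evaporates: no calibration between power-containment and size is required.
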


Although Theorem \ref{mainthm2} applies to $\delta$-hyperbolic groups, and thus says that any approximate subgroup is controlled by an approximate subgroup of a cyclic subgroup, it is likely that our bound can be improved a lot for these groups in the spirit of Safin's bound  \cite{safin} in the free group case (he obtained a bound of the form $O_{\eps}(K^{2+\eps})$).

%a careful chasing of constants in \cite{breuillard-uniformtits}, but also an effective proof of
%It would in principle be possible to extract explicit bounds for the unspecified $O_{K,n}(1)$ term although they would not be especially good, however, and certainly not polynomial in $K$.

%Furthermore, this would be considerably more work than a routine exercise in chasing constants in \cite{breuillard-uniformtits}, on account of the dependence of that work on \S 2 of \cite{breuillard-gap}, which utilises a compactness argument.

\section{Six lemmas}

In this section we assemble the tools from the literature that we require to prove Theorem \ref{mainthm}. The title of the section is hardly accurate, since some of these results are quite substantial.

\begin{proposition}\label{uniform-tits}
Let $n \geq 1$ be an integer. Then there is an integer $m = m(n)$ with the following property: if $A \subseteq \GL_n(\C)$ is a finite symmetric set then either the group $\langle A \rangle$ generated by $A$ is virtually solvable, or else $A^{m}$ contains two elements generating a free subgroup of $\GL_n(\C)$.
\end{proposition}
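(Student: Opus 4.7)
The plan is to quantify the classical ping-pong argument from Tits's original proof, keeping the number of steps $m$ uniform in $A$. Assume $\langle A\rangle$ is not virtually solvable and let $G$ denote the Zariski closure of $\langle A\rangle$ in $\GL_n$. Replacing $G$ by its connected component and quotienting by the solvable radical, we may suppose $G$ has a nontrivial semisimple quotient, so the ping-pong machinery will apply to a suitable irreducible representation of $G$.

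The first step is a quantitative escape from subvarieties in the spirit of Eskin, Mozes and Oh: for any proper subvariety $V \subset G$ of degree bounded in terms of $n$, some element of $A^{m_1(n)}$ lies outside $V$, with $m_1$ depending only on $n$ and $\deg V$. Since $A$ is not contained in any proper subvariety of $G$, this is obtained by a dimension induction where at each stage one exhibits an element of $A^{O(1)}$ missing the current subvariety.

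The second and most substantial step is to produce a \emph{very proximal} element inside some $A^{m_2(n)}$: an element $a$ acting on a chosen irreducible representation of $G$ with a simple dominant eigenvalue separated by a definite margin from the moduli of the remaining eigenvalues. Existence of some proximal element follows by applying the escape step to the proper subvariety of non-proximal matrices. The delicate point is uniformity of the spectral gap; here one invokes Breuillard's height gap theorem \cite{breuillard-gap}, which asserts that for any finitely generated non-virtually-solvable subgroup of $\GL_n(\bar{\Q})$, some element of uniformly bounded word length has normalized Weil height bounded below by $c(n)>0$. Choosing a place $v$ of the ambient number field at which this height is concentrated converts the height lower bound into a uniform spectral gap for that element in the local field $k_v$.

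With one very proximal $a \in A^{m_2}$ in hand, applying escape once more produces $g \in A^{m_3}$ such that $gag^{-1}$ is also very proximal and has attracting and repelling projective points disjoint from those of $a$---these bad configurations again form a proper subvariety. A ping-pong argument in the projective space over $k_v$ then shows that suitable powers $a^N$ and $ga^Ng^{-1}$ generate a free subgroup, with $N$ controlled by the uniform proximality constant, so $m$ can be taken of order $N(m_2+m_3)$. The main obstacle throughout is the second step: the archimedean place alone cannot deliver a uniform spectral gap, and one must pass to the adelic/height framework to locate a place at which proximality is quantitative; fitting this together with the escape argument without inflating $m$ is where the depth of the proof lies.
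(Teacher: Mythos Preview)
The paper does not actually prove this proposition: its entire proof is the sentence ``This is the `Uniform Tits Alternative' of the first author \cite{breuillard-uniformtits}.'' In other words, the result is imported as a black box from the literature, and no argument is given or expected here.

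Your proposal, by contrast, is a sketch of the proof \emph{from the cited paper} \cite{breuillard-uniformtits} itself. As such it is broadly faithful to Breuillard's strategy: quantitative escape from subvarieties (in the spirit of Eskin--Mozes--Oh), the height gap theorem \cite{breuillard-gap} to secure a uniform spectral gap at some place, production of a very proximal element in bounded word length, and then a ping-pong with a conjugate to obtain free generators. So there is no mathematical gap in your outline at the level of a sketch, and you have correctly identified the crux (uniformity of the proximality constant, resolved via heights and a non-archimedean place). But relative to \emph{this} paper, you have written far more than required: the intended ``proof'' is a one-line citation, and the proposition functions as one of the six off-the-shelf ingredients assembled in Section~2.
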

\begin{proof} This is the ``Uniform Tits Alternative'' of the first author \cite{breuillard-uniformtits}.\end{proof}

The following result is a straightforward consequence of a lemma of Sanders \cite{sanders}. For very closely related results, see \cite{croot-sisask} and \cite{schoen}.

\begin{proposition}\label{sanders-cs}
Suppose that $A$ is a $K$-approximate group. Then for any $\eps > 0$ and $m \in \N$ there are sets $A' \subseteq A^5$, $B \subseteq A^4$ with $|A'| \geq |A|$, $|B| \gg_{K,m,\eps}|A|$ and $|A'B^{m}| \leq (1 + \eps)|A'|$.
\end{proposition}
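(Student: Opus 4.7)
The plan is to combine Sanders's quantitative noncommutative Bogolyubov--Ruzsa lemma \cite{sanders} (with refinements in \cite{croot-sisask} and \cite{schoen}) with a short multiplicative pigeonhole. The main input is the following form of Sanders's lemma: for any $K$-approximate group $A$ and any $N \in \mathbb{N}$, there is a symmetric set $B \ni 1$ with $B \subseteq A^4$, $B^N \subseteq A^4$, and $|B| \gg_{K, N} |A|$ (with an explicit density bound). This lemma does the real work; everything else is bookkeeping.

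First I would choose $N := Jm$, where $J := \lceil 4 \log K / \log(1 + \eps) \rceil = O(\log K / \eps)$, and invoke Sanders's lemma with parameter $N$ to produce the set $B$. Then I would consider the nested chain
\[
A \,\subseteq\, A B^m \,\subseteq\, A B^{2m} \,\subseteq\, \cdots \,\subseteq\, A B^{Jm},
\]
noting that $1 \in B^m$ guarantees the inclusions. Crucially, since $B^{Jm} = B^N \subseteq A^4$, every term in the chain is contained in $A \cdot A^4 = A^5$, and in particular the largest one satisfies $|A B^{Jm}| \leq |A^5| \leq K^4 |A|$.

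A standard multiplicative pigeonhole over the $J$ consecutive ratios $|A B^{(j+1)m}|/|A B^{jm}|$ then yields some $0 \leq j \leq J-1$ for which this ratio is at most $(K^4)^{1/J} \leq 1 + \eps$, by our choice of $J$. Taking $A' := A B^{jm}$, all three desired conclusions are immediate: $A' \subseteq A^5$ by the previous paragraph, $|A'| \geq |A|$ because $A \subseteq A'$, and $|A' B^m| = |A B^{(j+1)m}| \leq (1 + \eps)|A'|$. The density bound $|B| \gg_{K, m, \eps} |A|$ transfers directly from Sanders's lemma, since $N$ depends only on $K, m, \eps$.

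The main (and essentially only) obstacle is invoking Sanders's lemma, which is a substantial recent result. The pigeonhole argument on top of it uses nothing more than the polynomial growth $|A^k| \leq K^{k-1}|A|$ of an approximate group. The exponential-in-$K$ behavior of the final density bound referenced in the introduction comes precisely from substituting $N = O(m \log K / \eps)$ into Sanders's explicit quantitative estimate.
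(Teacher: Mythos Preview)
Your proposal is correct and follows essentially the same argument as the paper: invoke Sanders's lemma to obtain a large symmetric $B$ with $B^{Jm}\subseteq A^4$, form the nested chain $A\subseteq AB^m\subseteq\cdots\subseteq AB^{Jm}\subseteq A^5$, and pigeonhole on the $J$ consecutive ratios using $|A^5|\leq K^4|A|$. Your explicit choice $J=\lceil 4\log K/\log(1+\eps)\rceil$ is precisely the paper's condition $(1+\eps)^k\geq K^4$, so the two proofs are the same up to notation.
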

\begin{proof}
Let $k = k(\eps,K)$ be an integer to be specified later. It is shown in Sanders \cite{sanders} that there is a symmetric set $B$ containing the identity such that $|B| \geq \exp(-K^{O(km)}) |A|$ and $B^{km} \subseteq A^4$.  We have the nesting
\[ A \subseteq A B^m \subseteq A B^{2m} \subseteq \dots \subseteq A B^{km} \subseteq A^5.\]
Note that $|A^5| \leq K^4 |A|$. Supposing that
\[ (1 + \eps)^k \geq K^4,\] it follows from the pigeonhole principle that there is some $j$, $0 \leq j  < k$ such that, setting $A' := A B^{jm}$, we have
\[ |A' B^m| \leq (1 + \eps)|A'|.\]
These sets $A'$ and $B$ then satisfy the requirements of the proposition.
\end{proof}

\begin{proposition}\label{f2-na}
Suppose that $A$ is a subset of some group $G$ and that $X$ is a further subset of $G$ containing the identity and two elements generating a nonabelian free group. Then $|AX| \geq \frac{5}{4}|A|$.
\end{proposition}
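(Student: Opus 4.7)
Since $X$ contains the identity and elements $a, b$ generating a free subgroup $F_2 \leq G$, we have $AX \supseteq A \cup Aa \cup Ab$, so it suffices to show that $|A \cup Aa \cup Ab| \geq \tfrac{5}{4}|A|$. My plan is to exploit the fact that the right Cayley graph of $\langle a, b\rangle$ gives $G$ the structure of a disjoint union of $4$-regular trees, and then run a forest-edge count.

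Let $\Gamma$ be the graph on $G$ with edge set $\{\{g, gs\} : g \in G,\ s \in \{a^{\pm 1}, b^{\pm 1}\}\}$. Because $F_2 \cong \langle a, b\rangle$ acts freely by right multiplication, each connected component of $\Gamma$ is isomorphic to the Cayley graph of $F_2$ with generating set $\{a, b\}$, which is the $4$-regular tree; in particular $\Gamma$ is a forest. The restriction of $\Gamma$ to the finite set $A$ is therefore a forest on $|A|$ vertices, and so has at most $|A| - 1$ edges.

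I now double-count the $2|A|$ pairs $(x, s) \in A \times \{a, b\}$, each of which determines the oriented edge $x \to xs$ of $\Gamma$. If two distinct pairs $(x, s), (x', s')$ gave the same undirected edge, then necessarily $x' = xs$ and $s' = s^{-1}$, which is impossible since $a, b, a^{-1}, b^{-1}$ are pairwise distinct in $F_2$. Thus the map from pairs to undirected edges is injective, so the number of pairs with $xs \in A$ is at most the number of internal edges, namely $\leq |A|-1$. Hence at least $|A|+1$ pairs satisfy $xs \notin A$. Each $y \in (Aa \cup Ab) \setminus A$ is the image of at most two such pairs (from the preimages $ya^{-1}$ and $yb^{-1}$ when these lie in $A$), so $|(Aa \cup Ab) \setminus A| \geq (|A|+1)/2$, and therefore $|A \cup Aa \cup Ab| \geq \tfrac{3}{2}|A|$, which is comfortably better than the required $\tfrac{5}{4}|A|$.

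The only step with any content is verifying that $\Gamma$ is a forest, which is simply the free-group hypothesis in disguise; the remainder is routine double-counting, and there is no genuine obstacle. It is worth noting that the argument in fact gives the stronger constant $\tfrac{3}{2}$, but since only $\tfrac{5}{4}$ is used downstream there is no reason to aim for sharpness here.
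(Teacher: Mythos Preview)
Your proof is correct. The forest edge--count argument is clean and, as you note, actually yields the stronger bound $|A \cup Aa \cup Ab| \geq \tfrac{3}{2}|A|$.

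The paper does not spell out a proof but points to von Neumann's classical argument that a group containing a nonabelian free subgroup fails F{\o}lner's criterion. That argument is organised differently: one partitions each left coset $gF_2$ according to the last letter of the reduced word, setting $E_s = \{gw : w \text{ ends in } s\}$ for $s \in \{a^{\pm 1}, b^{\pm 1}\}$, and observes that $(A \setminus E_{a^{-1}})a \subseteq E_a$ and $(A \setminus E_{b^{-1}})b \subseteq E_b$ are disjoint from each other and from $A \cap (E_{a^{-1}} \cup E_{b^{-1}})$. Summing the sizes gives $|A \cup Aa \cup Ab| \geq 2|A|$, so in fact the ping--pong decomposition yields an even better constant than your $\tfrac{3}{2}$. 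Your approach trades this sharpness for a more structurally transparent argument: once one knows the Cayley graph is a forest, the edge bound $\leq |A|-1$ is immediate and no bookkeeping with reduced words is needed. Either route is more than sufficient for the $\tfrac{5}{4}$ stated in the proposition, and both rest on the same underlying fact (freeness of $\langle a,b\rangle$), so the difference is one of packaging rather than of substance.
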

\begin{proof}
This is basically the observation, originally due to von Neumann, made whilst proving that every group containing a non-abelian free subgroup $F_2$ is not amenable using F{\o}lner's criterion; see for example the notes of the third author \cite{tao-amenable-blog} or the remarks on page 4 of \cite{breuillard-uniformtits}.
\end{proof}

\begin{proposition}\label{malcev-platanov}
Suppose that $G \leq \GL_n(\C)$ is virtually solvable. Then there is a solvable subgroup $H \leq G$ such that $[G : H] = O_n(1)$.
\end{proposition}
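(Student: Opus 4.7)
The plan is to reduce the statement to one about algebraic groups via Zariski closure. Let $\overline{G}$ denote the Zariski closure of $G$ in $\GL_n(\C)$ and let $\overline{G}^0$ be its identity component. The first step is to verify that $\overline{G}^0$ is solvable. By hypothesis $G$ has a solvable subgroup $G_0$ of finite index; its Zariski closure $\overline{G_0}$ is a closed solvable subgroup of $\overline{G}$ (closure preserves solvability because the commutator map is continuous, so $\overline{G_0}^{(k)} \subseteq \overline{G_0^{(k)}} = \{1\}$) and still of finite index. Since $\overline{G}^0$ is connected and the complement of $\overline{G_0}$ in $\overline{G}$ is a finite union of cosets, we must have $\overline{G}^0 \subseteq \overline{G_0}$, and hence $\overline{G}^0$ is solvable. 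The group $H := G \cap \overline{G}^0$ is then a solvable subgroup of $G$, and the second isomorphism theorem gives $G/H \cong G\overline{G}^0/\overline{G}^0 \leq \overline{G}/\overline{G}^0$, so $[G:H] \leq [\overline{G}:\overline{G}^0]$.

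The remaining task, which is the real content of the proposition, is to bound the number of connected components $[\overline{G}:\overline{G}^0]$ in terms of $n$ alone. This is the classical theorem of Mal'cev (in characteristic zero) and Platonov on virtually solvable linear groups. The standard argument analyses the conjugation action of the finite group $F := \overline{G}/\overline{G}^0$ on the connected solvable algebraic group $\overline{G}^0$: its unipotent radical $U \lhd \overline{G}^0$ is characteristic, hence $F$-stable, and the quotient $T := \overline{G}^0/U$ is a torus of dimension $d \leq n$. The induced action yields a homomorphism $F \to \Aut(T) \cong \GL_d(\Z)$, whose image is bounded by Minkowski's bound on finite subgroups of $\GL_d(\Z)$, while its kernel (the part of $F$ centralising $T$) is bounded via Jordan's theorem on finite subgroups of $\GL_n(\C)$ applied to a suitable faithful representation.

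The hard step is precisely the last one, namely this uniform bound on the component group. Pinning down an explicit constant would require unwinding the Minkowski and Jordan bounds; however, only the qualitative $O_n(1)$ dependence is needed for the present application, and for that one may invoke Mal'cev--Platonov as a black box.
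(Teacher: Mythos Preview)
Your proposal is correct. The paper itself offers no argument at all here: its ``proof'' is a one-line citation to \cite[Appendix B]{bgt-long}, treating the Mal'cev--Platonov theorem entirely as a black box. Your write-up goes further, giving the standard reduction via Zariski closure (which is clean and correct) and then sketching the Minkowski/Jordan argument for bounding the component group $[\overline{G}:\overline{G}^0]$. The reduction step---showing $\overline{G}^0$ is solvable because it sits inside $\overline{G_0}$, and bounding $[G:H]$ by the number of components---is exactly the right framing and is more informative than what the paper provides.

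One small caution: in your outline, the claim that the kernel of $F \to \Aut(T)$ is bounded ``via Jordan's theorem applied to a suitable faithful representation'' is the most delicate point, and as stated it is hand-wavy (Jordan's theorem bounds the index of an abelian normal subgroup, not the order outright, so one still has to argue why that abelian piece is itself bounded). You rightly flag this as the hard step and fall back on citing Mal'cev--Platonov, which is precisely what the paper does; so there is no gap relative to the paper's own treatment.
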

\begin{proof} This result of Mal'cev-Platonov is proved in \cite[Appendix B]{bgt-long}.\end{proof}

\begin{proposition}\label{solv-to-nil}
Let $K \geq 2$, and suppose that $A$ is a $K$-approximate \emph{solvable} subgroup of $\GL_n(\C)$. Then $A$ is $K^{O_n(1)}$-controlled by a $K^{O(1)}$-approximate \emph{nilpotent} subgroup.
\end{proposition}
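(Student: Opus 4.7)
\textit{Proof plan.} This statement is essentially the main theorem of \cite{bg-solv}, so the proof in the paper should just be a citation; my plan for how to approach it from scratch is as follows. The first move is to cut down to a structured subgroup: apply Proposition \ref{malcev-platanov} to pass from $\langle A \rangle$ to a genuinely solvable subgroup of index $O_n(1)$, noting that intersecting $A$ with a finite-index subgroup and replacing $A$ by a suitable power costs only $K^{O_n(1)}$ in all approximate-group parameters. Next, by the Lie--Kolchin theorem (applied after taking Zariski closure, and possibly passing to a further finite-index subgroup to reach the identity component), I can conjugate so that the ambient group lies in the upper-triangular group $B_n(\C)$. Write $\pi : B_n(\C) \to T_n(\C) \cong (\C^\times)^n$ for the projection to the diagonal torus, and let $U_n(\C) = \ker \pi$ be the unipotent radical, which is nilpotent of class at most $n-1$.

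The core of the argument is to analyse the abelian image $\pi(A)$ and the unipotent fibres separately. Since $\pi(A)$ is a $K$-approximate subgroup of the abelian group $(\C^\times)^n$, one applies a Freiman-type theorem for torsion-free abelian ambient groups to replace $\pi(A)$ by a coset progression of dimension and ratio bounded in terms of $K$ and $n$, giving $K^{O_n(1)}$-control by an abelian, hence nilpotent, approximate group. On the unipotent side, $A \cap U_n(\C)$ (or more precisely a suitable translate of a Sanders-type normal subset of $A^{O(1)}$ lying in $U_n(\C)$) is a $K^{O(1)}$-approximate group in a nilpotent group, which can be handled by induction on the nilpotency class using Mal'cev coordinates: this gives a nilpotent $K^{O(1)}$-approximate group controlling the unipotent part.

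The final step is to combine these two nilpotent approximate pieces into a single nilpotent approximate group $B$ that $K^{O_n(1)}$-controls $A$. One does this by pulling the progression in $T_n(\C)$ back through $\pi$, lifting generators to $A$, and forming the product with the nilpotent object living in the unipotent fibre.

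\textbf{Main obstacle.} The hard part is the last step: the naive lift of a progression from the torus $T_n(\C)$ need not commute sufficiently with the unipotent fibre to yield a \emph{nilpotent} (rather than merely solvable of bounded derived length) approximate group, since conjugation by the torus acts on $U_n(\C)$ via the roots. One has to exploit the fact that, on the $K$-approximate scale, the relevant torus elements act on $U_n(\C)$ through eigenvalues that are, up to bounded error, roots of unity, which can be arranged after a further bounded-index passage. Tracking this compatibility carefully, so that the constructed $B$ is genuinely nilpotent of class $\le n-1$ with polynomial-in-$K$ control, is precisely the content of the main theorem of \cite{bg-solv} and is where the $K^{O_n(1)}$ bound comes from.
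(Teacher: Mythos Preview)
You correctly identify that this proposition is essentially a citation to \cite{bg-solv}, and your sketch of the Lie--Kolchin/torus-plus-unipotent strategy is a fair outline of what that paper does. However, you have overlooked the one point the present paper's proof actually addresses beyond the citation. The main result of \cite{bg-solv} only yields that $A$ is $K^{O_n(1)}$-controlled by a $K^{O_n(1)}$-approximate nilpotent group, whereas the proposition as stated here asserts the controlling object is a $K^{O(1)}$-approximate group, with exponent \emph{independent of $n$}. The paper flags this discrepancy explicitly and removes it via Lemma~\ref{standard-add-comb}: once one knows $A$ is $K^{O_n(1)}$-controlled by something inside a nilpotent subgroup $H$, pigeonholing gives $|A \cap Hx| \geq K^{-O_n(1)}|A|$ for some coset, and then Lemma~\ref{standard-add-comb} shows that $A^2 \cap H$ is a $2K^3$-approximate group (the constant $2K^3$ coming only from $A$ being $K$-approximate, hence absolute in $n$) which still $K^{O_n(1)}$-controls $A$.

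Your write-up does not distinguish between the $n$-dependence of the control constant and the claimed $n$-independence of the approximation constant; you simply assert a ``nilpotent $K^{O(1)}$-approximate group'' without justification. That is precisely the gap the paper fills, and it is the only content of the proof beyond the citation.
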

\begin{proof} This is essentially the main result of \cite{bg-solv}. There is one difference, in that here we obtain a $K^{O(1)}$-approximate group rather than a $K^{O_n(1)}$-approximate group. However, this stronger statement follows easily from the weaker one and the additive-combinatorial lemma below. \end{proof}

Finally, we require a standard additive combinatorics lemma.

\begin{lemma}\label{standard-add-comb}
Let $0 < \delta < 1$, $k\geq 1$ and $K \geq 2$ be parameters. Suppose that $A$ is a $K$-approximate group of some ambient group $G$, and that $H \leq G$ is a subgroup with the property that $A^{k}$ intersects some coset $Hx$ in a set of size $\delta |A|$. Then $A^2 \cap H$ is a $2K^3$-approximate group which $2K^{2k+4}/\delta$-controls $A$.
\end{lemma}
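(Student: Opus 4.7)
The plan splits into two independent covering arguments, both leaning on the standard consequence $A^n \subseteq A X^{n-1}$ (and hence $|A^n| \leq K^{n-1}|A|$) of the hypothesis that $A$ is a $K$-approximate group, witnessed by some symmetric set $X$ with $|X| \leq K$.

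For the approximate-group claim, observe first that $A^2 \cap H$ is symmetric and contains the identity. Then $(A^2 \cap H)^2 \subseteq A^4 \cap H \subseteq (AX^3) \cap H$ with $|X^3| \leq K^3$. For each $y \in X^3$ such that $Ay \cap H$ is nonempty, pick a single representative $z_y = a_y y \in Ay \cap H$ with $a_y \in A$. Any $g = ay \in A^4 \cap H$ then factors as $g = (a a_y^{-1}) z_y$; here $a a_y^{-1} \in A \cdot A^{-1} = A^2$ by symmetry of $A$, and equals $g z_y^{-1} \in H$, so $g \in (A^2 \cap H) z_y$. Therefore $(A^2 \cap H)^2 \subseteq (A^2 \cap H) Z$ with $|Z| \leq K^3$, and symmetrising to $Z \cup Z^{-1}$ gives a symmetric covering set of size at most $2K^3$.

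For the control claim, set $S := A^k \cap Hx$, so $|S| \geq \delta|A|$. The key observation is that for any $a \in A$ the translate $aS$ lies in $A^{k+1}$ and is contained in the single coset $aHx$; hence two such translates coincide when $aH = a'H$ and are otherwise disjoint. Packing disjoint subsets of size $\geq \delta|A|$ inside $A^{k+1}$, which has size $\leq K^k|A|$, bounds the number of distinct left cosets $aH$ meeting $A$ by $K^k/\delta$. Choosing one representative $a_i \in A$ per such coset, every $a \in A$ factors as $a_i(a_i^{-1} a)$ with $a_i^{-1} a \in A^2 \cap H$, so $A \subseteq Y(A^2 \cap H)$ with $|Y| \leq K^k/\delta$. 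Inverting (and using symmetry of $A$ and of $A^2 \cap H$) also gives $A \subseteq (A^2 \cap H) Y^{-1}$, so $X := Y \cup Y^{-1}$ witnesses two-sided control with $|X| \leq 2K^k/\delta \leq 2K^{2k+4}/\delta$, while $|A^2 \cap H| \leq |A^2| \leq K|A|$ comfortably meets the ratio bound.

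There is no real obstacle; the only care required is to work with disjoint translates of the thick set $S$ rather than pigeonholing directly on $A^{k+1}$ or its powers — that is what produces the clean $K^k/\delta$ count — and to exploit the symmetry of $A^2 \cap H$ in order to upgrade a one-sided covering to a two-sided one. The constants $2K^3$ and $2K^{2k+4}/\delta$ stated in the lemma are quite generous compared with what this argument in fact yields.
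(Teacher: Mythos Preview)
Your argument is correct and takes a different route from the paper's. The paper quotes \cite[Lemma~3.3]{bg-compact} both for the $2K^3$-approximate group claim and for the inequality $|A^{2k}\cap H|\leq K^{2k-1}|A^2\cap H|$; combining the latter with $\delta|A|\leq |SS^{-1}|\leq |A^{2k}\cap H|$ yields $|A(A^2\cap H)|\leq K^2|A|\leq (K^{2k+1}/\delta)|A^2\cap H|$, after which Ruzsa's covering lemma does the rest. You instead supply the approximate-group part by hand and replace the Ruzsa step by a direct packing argument: the translates $a_iS\subseteq A^{k+1}$, one per left $H$-coset meeting $A$, are pairwise disjoint and bound the number of such cosets by $K^k/\delta$, whence $A\subseteq Y(A^2\cap H)$ with $|Y|\leq K^k/\delta$. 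This is more self-contained and in fact gives sharper constants than the stated $2K^{2k+4}/\delta$. One small slip: it is not true that $aS$ and $a'S$ \emph{coincide} whenever $aH=a'H$ (left multiplication by an element of $H$ need not preserve $A^k$), but you only use disjointness when $aH\neq a'H$ and then select one representative per coset, so the argument is unaffected.
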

\begin{proof}
According to \cite[Lemma 3.3]{bg-compact}, $B:=A^2 \cap H$ is a $2K^3$-approximate group and $|A^{2k} \cap H| \leq K^{2k-1}|B|$. Moreover $|AB|\leq |A^3|\leq K^2|A|\leq \frac{K^{2k+1}}{\delta}|B|$. An appeal to Ruzsa's covering lemma ends the proof.
\end{proof}

\section{Proof of the main theorem}

This is a very short task given the ingredients we assembled in the previous section. Let $A \subseteq \GL_n(\C)$ be a $K$-approximate group. By Proposition \ref{sanders-cs}, applied with $\eps = 0.1$ (say) and with $m = m(n)$ the quantity appearing in Proposition \ref{uniform-tits}, there are sets $A' \subseteq A^5$, $B \subseteq A^4$ with $|A'| \geq |A|$ and $|B| \gg_{n,K}|A|$ such that $|A' B^m| < \frac{5}{4}|A'|$. By Proposition \ref{f2-na}, $B^m$ cannot contain two elements generating a nonabelian free group. By the uniform Tits alternative, $B$ must generate a virtually solvable group $G$. By Proposition \ref{malcev-platanov}, this $G$ contains a solvable subgroup $H$ with $[G : H] = O_n(1)$, and so by the pigeonhole principle there is some $x$ such that $|B \cap Hx| \gg_n |B|$. This implies that
\[ |A^4 \cap Hx| \geq |B \cap Hx| \gg_n |B| \gg_{n,K} |A|.\]
At this point we may apply Lemma \ref{standard-add-comb} to conclude that $A^2 \cap H$ is a $2K^3$-approximate solvable group which $O_{K,n}(1)$-controls $A$. Finally, an application of Proposition \ref{solv-to-nil} completes the proof of Theorem \ref{mainthm}.\endproof\vspace{11pt}

We leave the proof of Theorem \ref{mainthm2}, which proceeds along almost identical lines but does not require Proposition \ref{malcev-platanov} or Proposition \ref{solv-to-nil}, to the reader.\vspace{11pt}

\emph{Acknowledgement.} We thank Thomas Delzant for useful discussions regarding hyperbolic groups.

\setcounter{tocdepth}{1}

\end{document}